\newtheorem{thm}{Theorem}
\newtheorem{lem}[thm]{Lemma}
\begin{document}

\title{Permanency and bifurcations of bounded solutions near homoclinics with symmetric eigenvalues 
}

\titlerunning{Permanency and bifurcations of bounded solutions near homoclinics}

\author{L. Soleimani         \and    O. RabieiMotlagh \and  H.M. Mohammadinejad
}



\institute{L. Soleimani  \at
             Dept. of Applied Mathematics, University of Birjand, Birjand, Iran.
              \email{l.soleimani@birjand.ac.ir}         
           \and
          O. RabieiMotlagh (corresponding author)\at
               Dept. of Applied Mathematics, University of Birjand, Birjand, Iran.
\email{orabieimotlagh@birjand.ac.ir}
\and
H.M. Mohammadinejad \at
 Dept. of Applied Mathematics, University of Birjand, Birjand, Iran.
\email{hmohammadi@birjand.ac.ir}
}

\date{Received: date / Accepted: date}

\maketitle

\begin{abstract}
We consider a system with a homoclinic orbit. We decompose the corresponding variational equation on the space of solutions and provide sufficient conditions for the permanency of the homoclinic  in the space of $C^1$ vector fields.  We also provide new sufficient conditions for the persistence  and multiple bifurcations of the  bounded solutions nearby. Our results can be  verified numerically and do not meet  the limitations of classic methods (like Melnikov`s integrals and Poincare`s map). 
\keywords{Homoclinic bifurcation \and Lyapunov-Schmidt reduction \and Saddle-node bifurcation\and Transcritical bifurcation}
 \subclass{34D10 \and 34C37}
\end{abstract}




\section{Introduction}
In the theory of differential equations, the homoclinics are usually known because of the structural unstability often imposing on systems. Some of the most studied chaotic motions arise from homoclinic bifurcations and many classic methods of the theory (like Melnikov`s integrals and Poincare`s maps, \cite{Wiggins}, \cite{Guckenheimer}) are developed for studying such behaviors. And today, homoclinics are still receiving attention from researchers of both the sciences and engineering areas (see for example \cite{Barrio,Chacon,Chen,Franca,Kowalczyk,Wu,CZhang,Zhang}). 
 
In the last decades, non-classic methods have been continuously developed by the researchers for studying the behavior of systems around a perturbed homoclinic. In the years after that, some studies have been concerned about providing sufficient conditions for the persistence of regular behaviors (like periodic orbits, bounded solutions and etc.) near a perturbed homoclinic. In an interesting study, \cite{Zhu}, Zhu and Zhang considered the $n$-dimensional differential equation $ \dot{x}=f(x)+g(x,t) $ with $f,g \in C^3$. They provided sufficient conditions for the existence of an invariant manifold which is spanned by homoclinic orbits. They called this invariant manifold as homoclinic finger-ring and showed that solutions on this manifold remain bounded.  
 
In this paper, we will provide a sufficient condition for the permanency of bounded solutions near a perturbed homoclinic and will explain how it can include the permanency of the homoclinic itself. We will also provide new sufficient conditions for the bifurcations of solutions nearby. Consider the system 
\begin{equation}\label{eq2} 
\dot{x}=f(x)+ \epsilon g(x,t),\hspace{1cm}x\in\mathbb{R}^2 
\end{equation} 
where $f=(f_1,f_2)$ and $g=(g_1,g_2)$ are respectively $C^2$ and $C^1$ w.r.t. $x$ and $g$ is bounded (but not necessarily periodic) w.r.t. $t$. We assume that the unperturbed system $\dot{x}=f(x)$ has a homoclinic orbit $\gamma(t)=\big(\gamma_1(t),\gamma_2(t)\big)$ based on the equilibrium $x_0=0$ with the corresponding eigenvalues $\pm\omega\neq 0 $. We will impose a decomposition on the corresponding variational equation and will develop results and methods to find bounded solutions and their bifurcations near $\gamma(t)$. 
Our method can be verified numerically and does not meet the limitations of classic methods (like Melnikov`s integrals and Poincare`s map). Apparently, this makes the results more appropriate for real-world applications in compare with former results. The main results are summarized below. We introduce the conditions: 
\begin{itemize}
 \item[\textbf{C1}: \ ]{ $\mathcal{F}_1=\int\limits_{\mathbb{R}}
\frac{1}{\Delta(s)}f\big(\gamma(s)\big) \wedge Df\big(\gamma(s)\big)\gamma(s)\,ds\neq0$.
 }
 \item[\textbf{C1$^{'}$}: \ ]{$\mathcal{F}_1^{'}=\int\limits_{\mathbb{R}}
\frac{1}{\Delta(s)}f\big(\gamma(s)\big)\wedge g\big(\gamma(s),s\big)\,ds\neq0$.
 \item[\textbf{C2}: \ ]{The map $\gamma \wedge f\big(\gamma\big)\in C^0_b(\mathbb{R},\mathbb{R}^2) $ is not identically zero. Here the wedge product of two vectors $ u=(a,b) $ and $ v=(a^{'}, b^{'}) $ is defined as $ u \wedge v =ab^{'} -ba^{'}$.}
  }
  \end{itemize}
 
 Note that, the assumption \textbf{C2} implies that $\gamma$ and $\gamma^{'}$ are linearly independent functions.
 \begin{thm}\label{th2} Consider system (\ref{eq2}).\\
 $(i)$  If \textbf{C1} holds  then for $0\!\leq\!|\epsilon|\!\!<\!\!<\!\!1$,  (\ref{eq2}) has a bounded solution $x(t)$ near $\gamma(t)$. Furthermore, $\|x(t)-\gamma(t)\|\rightarrow0$ as $\epsilon\rightarrow0$.\\
 $(ii)$ If \textbf{C1$^{'}$} and \textbf{C2}  hold then for $0\!\leq\!|\epsilon|\!\!<\!\!<\!\!1$, the system has a bounded solution $x(t)$ near $ \gamma(t)$. Furthermore, $\|x-\gamma\|\rightarrow0$ as $\epsilon\rightarrow0$.
\end{thm}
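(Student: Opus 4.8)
The plan is to set up a Lyapunov--Schmidt reduction directly on the space of bounded solutions, using the variational equation along $\gamma$ as the linear part. Writing $x(t)=\gamma(t)+z(t)$ and using $\dot\gamma=f(\gamma)$, the perturbed system becomes $Lz=N(z,t)+\epsilon g(\gamma+z,t)$, where $Lz:=\dot z-Df(\gamma(t))z$ and $N(z,t):=f(\gamma+z)-f(\gamma)-Df(\gamma)z=O(\|z\|^2)$ since $f\in C^2$. I would work with $L:C^1_b(\mathbb R,\mathbb R^2)\to C^0_b(\mathbb R,\mathbb R^2)$. Because the eigenvalues at $x_0=0$ are $\pm\omega\ne0$, the asymptotic matrix $Df(\gamma(t))\to Df(0)$ is hyperbolic, so $L$ admits exponential dichotomies on both half-lines and is Fredholm (Palmer). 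Its kernel on bounded functions is spanned by $\dot\gamma=f(\gamma)$, which decays like $e^{-\omega|t|}$; the second solution of the variational equation grows like $e^{+\omega|t|}$ and is discarded. Thus $\ker L=\operatorname{span}\{f(\gamma)\}$ is one-dimensional.

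Next I would make the solvability functional explicit. The crucial planar identity is that for any $2\times2$ matrix $A$ one has $JA+A^{T}J=\operatorname{tr}(A)\,J$ with $J=\left(\begin{smallmatrix}0&-1\\1&0\end{smallmatrix}\right)$; combined with Liouville's formula $\dot\Delta=\operatorname{tr}(Df(\gamma))\,\Delta$ this shows that $\psi(t):=\tfrac{1}{\Delta(t)}\,Jf(\gamma(t))$ solves the adjoint equation $\dot\psi=-Df(\gamma)^{T}\psi$. Since $\operatorname{tr}Df(0)=\omega+(-\omega)=0$, the Wronskian $\Delta$ stays bounded away from $0$ and $\infty$, so $\psi$ decays exponentially and spans the one-dimensional bounded cokernel. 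Because $\langle Ju,v\rangle=u\wedge v$, the Fredholm solvability condition $\int_{\mathbb R}\langle\psi,h\rangle\,dt=0$ is exactly $\int_{\mathbb R}\tfrac1\Delta f(\gamma)\wedge h\,dt=0$; that is, the integrals $\mathcal F_1,\mathcal F_1'$ are precisely the pairings of the nonlinearity and the perturbation against the bounded adjoint mode.

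With this in hand the reduction is routine. Splitting $z=\xi\,f(\gamma)+w$ with $w$ in a fixed complement of $\ker L$, I solve the range component $Lw=P\big(N+\epsilon g\big)$ for $w=w(\xi,\epsilon)=O(\xi^2,\epsilon)$ by the implicit function theorem (equivalently, a contraction built from the bounded right inverse of $L$ on its range, furnished by the dichotomies). Substituting back yields the scalar bifurcation equation $B(\xi,\epsilon):=\int_{\mathbb R}\langle\psi,\,N(\xi f(\gamma)+w,t)+\epsilon g(\gamma+\xi f(\gamma)+w,t)\rangle\,dt=0$. For part $(ii)$ the expansion reads $B(\xi,\epsilon)=\epsilon\,\mathcal F_1'+\tfrac12 c_2\,\xi^2+(\text{higher order})$, with $c_2=\int_{\mathbb R}\langle\psi,D^2f(\gamma)[f(\gamma),f(\gamma)]\rangle\,dt$: hypothesis \textbf{C1$'$} makes the $\epsilon$-coefficient $\mathcal F_1'\ne0$, while \textbf{C2} (the linear independence of $\gamma$ and $\gamma'$) is what I would use to guarantee the quadratic coefficient does not degenerate, so that $B=0$ is solved by $\xi=O(\sqrt{|\epsilon|})\to0$, producing a bounded solution with $\|x-\gamma\|\to0$ along a saddle-node-type branch. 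For part $(i)$, \textbf{C1} plays the analogous role entirely at the level of the unperturbed field: it is the nonvanishing of the pairing of $Df(\gamma)\gamma$ against the adjoint mode, and I would use it to show the reduced equation is solvable for \emph{every} admissible small $g$, whence the same conclusion.

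The main obstacle is the whole-line functional-analytic setup rather than the algebra. Since $g$ need not be periodic, there is no Poincar\'e map and no compact reduction; everything must be carried out in $C^0_b(\mathbb R,\mathbb R^2)$, using the exponential dichotomies to build the inverse of $L$ on its range and to control $w(\xi,\epsilon)$ uniformly in $t$. The delicate bookkeeping is to show that the corrector $w$ contributes only at higher order, so that the leading coefficients of $B$ are exactly the stated integrals $\mathcal F_1$ and $\mathcal F_1'$, and then to convert the smallness of $(\xi,w)$ into the uniform estimate $\|x(t)-\gamma(t)\|\to0$ as $\epsilon\to0$. Verifying that the nondegeneracy supplied by \textbf{C1} and by \textbf{C2} is exactly what is needed at the relevant order of the reduction is where I expect the real work to lie.
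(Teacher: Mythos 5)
The paper never actually writes out the proof of Theorem~\ref{th2} (it is left ``under construction''), but its Section~4 fixes the intended mechanism, and your reduction is missing the key device. The paper's ansatz is $x(t-\beta)=\alpha\gamma(t)+z(t)$ with \emph{two extra unfolding parameters}: an amplitude $\alpha\approx1$ and a phase $\beta\approx0$, so the bifurcation function is $B(\xi,\alpha,\beta,\epsilon)$ — one scalar equation in several unknowns. Condition \textbf{C1} is then exactly $\partial_\alpha B(0,1,0,0)\neq0$: differentiating the inhomogeneity $f(\alpha\gamma)-\alpha f(\gamma)+\cdots$ at $\alpha=1$ gives $Df(\gamma)\gamma-f(\gamma)$, and pairing with the adjoint mode $\psi=\frac{1}{\Delta}Jf(\gamma)$ kills the $f(\gamma)$ term, leaving precisely $\mathcal F_1$. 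The implicit function theorem then solves $B=0$ for $\alpha=\alpha(\xi,\beta,\epsilon)$ for \emph{every} admissible $g$ and every small $\epsilon$ — which is also why the Remark can say \textbf{C1} is independent of $g$. Your ansatz $x=\gamma+z$ has no $\alpha$, so $Df(\gamma)\gamma$ never enters your expansion: the only second-order object you produce is $D^2f(\gamma)[f(\gamma),f(\gamma)]$, whose pairing with $\psi$ is a different integral from $\mathcal F_1$ (linear in $Df$ versus quadratic in $D^2f$). Your sentence ``I would use \textbf{C1} to show the reduced equation is solvable for every admissible small $g$'' is therefore not a proof step — in your setup there is no variable left for \textbf{C1} to control.

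Part $(ii)$ has the same structural problem plus two concrete errors. First, \textbf{C2} states that $\gamma\wedge f(\gamma)=\gamma\wedge\gamma'$ is not identically zero, i.e.\ that the amplitude direction $\gamma$ is independent of the kernel direction $\gamma'$; it says nothing about your quadratic coefficient $c_2=\int\langle\psi,D^2f(\gamma)[f(\gamma),f(\gamma)]\rangle\,dt$, so your claim that \textbf{C2} ``guarantees the quadratic coefficient does not degenerate'' is a misattribution, and $c_2\neq0$ would be an additional unstated hypothesis. Second, the balance $\epsilon\mathcal F_1'+\tfrac12c_2\xi^2=0$ yields real roots $\xi$ only when $\epsilon\mathcal F_1'/c_2\leq0$, i.e.\ for one sign of $\epsilon$, whereas Theorem~\ref{th2}$(ii)$ asserts existence for all $0\leq|\epsilon|\ll1$ (contrast with Theorem~\ref{th3}, which is the statement that is explicitly sign-dependent). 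Your identification of the Fredholm alternative, the adjoint solution $\psi=\frac{1}{\Delta}Jf(\gamma)$, and the solvability functional does match the paper's Lemma~\ref{the1} and projection $p$; what is missing is the $(\alpha,\beta)$-augmented ansatz that makes $\mathcal F_1$, $\mathcal F_1'$ and \textbf{C2} appear at the orders where they are actually used.
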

\begin{remark}
In part $(i)$ of theorem \ref{th2}, the condition \textbf{C1} is independent of $g$. This means that, if \textbf{C1} holds, then for any appropriate $C^1$ map $g(x,t)$ and $0\!\leq\!|\epsilon|\!\!<\!\!<\!\!1$, (\ref{eq2}) has a bounded solution near $\gamma(t)$. Especially, if $g=g(x)$ is independent of $t$, then this bounded solution is homoclinic. This means that the homoclinic of the unperturbed system in (\ref{eq2}) is permanence in the space of $C^1$ vector fields.
\end{remark}

Now  Let  $ \zeta(t)=\big(\zeta _{1}(t),\zeta _{2}(t)\big) $ be the unbounded  solution
of  $\dot{x}=Df\big(\gamma(t)\big)x$ and $\Delta(t)=\gamma_1^{'}(t)\zeta_2(t)-\gamma_2^{'}(t)\zeta_1(t)$. We introduce the following conditions applied in the next theorems.
\begin{itemize}
\item[\textbf{C3}: \ ]{$\mathcal{F}_3={\int\!\!\int}_{\mathbb{R}^2} \frac{f_2 \big(\gamma(s)\big)f_1 \big(\gamma(t)\big)}{\Delta(t)\Delta(s)}\Big[G(s,t)\wedge\hat{F}(s,t)\Big] \; ds dt\neq0 $
 \begin{equation*}
\text{Here}\hspace{0.3cm}\hat{F}(s,t)\!\!=\!\! \left(\!\! \begin{array}{c}
\left\langle  {\bigtriangledown f_1\big(\gamma(s)\big),\gamma(s)}  \right\rangle -f_1\big(\gamma(s)\big)\\
\\
\left\langle  \bigtriangledown f_2\big(\gamma(t)\big),\gamma(t)\right\rangle-f_2\big(\gamma(t)\big)
 \end{array}\!\! \right),\hspace{0.2cm}
 G(s,t) =
 \left(\! \begin{array}{c}
g_1\big(\gamma(s)\big) \\
\\
g_2\big(\gamma(t)\big)
 \end{array}\! \right).
 \end{equation*}
 }
\item[\textbf{C4}: \ ]{
\begin{eqnarray*}
\mathcal{F}_{4,1}&:=&\int _\mathbb{R} {\frac{1}{\Delta(s)}f_2\big(\gamma(s)\big)g_1\big(\gamma(s),s-\beta\big)} ds \!\neq\! 0,\\ \mathcal{F}_{4,2}&:=&{\int\!\!\!\int}_{\mathbb{R}^2} \frac{ f_1 \big(\gamma(t)\big)f_2 \big(\gamma(s)\big)}{\Delta(t)\Delta(s)}\Big[\bar{F}_2(t) \wedge \bar{F}_1(s) \Big]dtds\! \neq\!0,\\
\mathcal{F}_{4,3}&:=&\int _\mathbb{R} {\frac{f_2\big(\gamma(s)\big)}{\Delta(s)}\Big(\left\langle \bigtriangledown f_1\big(\gamma(s)\big),\gamma(s) \right\rangle - f_1(\gamma(s))\Big)ds}\neq 0.
\end{eqnarray*}
 }
 \item[\textbf{C4$^{'}$}: \ ]{
\begin{eqnarray*}
\mathcal{F}_{4,1}^{'}&:=&\int _\mathbb{R} {\frac{1}{\Delta(s)}f_1\big(\gamma(s)\big)g_2\big(\gamma(s),s-\beta\big)} ds \!\neq\! 0,\\ \mathcal{F}_{4,2}^{'}&:=&{\int\!\!\!\int}_{\mathbb{R}^2} \frac{ f_2 \big(\gamma(t)\big)f_1 \big(\gamma(s)\big)}{\Delta(t)\Delta(s)}\Big[\bar{F}_1(t) \wedge \bar{F}_2(s) \Big]dtds\! \neq\!0,\\
\mathcal{F}_{4,3}^{'}&:=&\int _\mathbb{R} {\frac{f_1\big(\gamma(s)\big)}{\Delta(s)}\Big(\left\langle \bigtriangledown f_2\big(\gamma(s)\big),\gamma(s) \right\rangle - f_2(\gamma(s))\Big)ds}\neq 0.
\end{eqnarray*}
 }
\item[\textbf{C5}: \ ]{
 $\mathcal{F}_{5}:= {\int\!\!\int}_{\mathbb{R}^2} \frac{ f_1 \big(\gamma(t)\big)f_2 \big(\gamma(s)\big)}{\Delta(t)\Delta(s)}\Big[\tilde{F}_2(t) \wedge \tilde{F}_1(s) \Big]dtds \neq0 $.
}
\end{itemize}
\[
\hspace{-1.2cm}\text{Here}\hspace{0.5cm}\bar{F}_k(s)=
 \left(\! \begin{array}{c}
 \sum_{i,j=1}^2 D^{2} _{x_i x_j} f_k \big(\gamma(s)\big)\\
\\
 \left\langle  \bigtriangledown f_k\big(\gamma(s)\big),\gamma(s)  \right\rangle-f_k\big(\gamma(s)\big)
 \end{array}\! \right),
\hspace{0.5cm}\hspace{0.5cm}k=1,2.
 \]
\[
 \tilde{F}_1(s)=
 \left(\! \begin{array}{c}
\bigtriangledown g_1 \big(\gamma(s),s-\beta\big)+\sum_{i,j=1}^2 D^{2} _{x_i x_j} f_1 \big(\gamma(s)\big)\Big(\kappa_{1}\gamma_j(s)+\kappa_{2j}+\kappa_{j3}\Big) \\
\\
\left\langle \bigtriangledown f_1(\gamma(s)),\gamma(s)\big) \right\rangle -f_1(\gamma(s))
 \end{array}\!\right),
\]
 \[
 \tilde{F}_2(t)= \left(\! \begin{array}{c}
\bigtriangledown g_2 \big(\gamma(s),s-\beta\big)+\sum_{i,j=1}^2 D^{2} _{x_i x_j} f_2 \big(\gamma(t)\big)\Big(\kappa_{1}\gamma_j(t)+\kappa_{2j}+\kappa_{3j}\Big)\\
\\
\left\langle\bigtriangledown f_2(\gamma(t)),\gamma(t)\big)\right\rangle -f_2(\gamma(t))
 \end{array}\!\right),
 \]
 \begin{eqnarray*}
\kappa_{1}\! &=&\!  -\frac{\int _\mathbb{R} {f_2\big(\gamma (s)\big) g_1\big(\gamma(s),t-\beta\big)/\Delta(s)ds}}{\int _\mathbb{R} {f_2\big(\gamma (s)\big) \Big(\left\langle \bigtriangledown f_1(\gamma(s)),\gamma(s)\right\rangle- f_1(\gamma(s))\Big)/\Delta(s)ds}},\\
 \kappa_{2j}\! & =&\! \zeta _j (t)\int_{-\infty}^{t}\!\! \frac{1}{\Delta(s)}f\big(\gamma(s)\big)\wedge Df\big(\gamma(s)\big) \gamma(s) ds\\
&+& f_j\big(\gamma(t)\big)\int\limits_0^t  \frac{1}{\Delta(s)}\big[Df\big(\gamma(s)\big)\gamma(s)-f\big(\gamma(s)\big)\big] \wedge \zeta(s)ds,\hspace{1cm}(j=1,2)\\
\kappa_{3j}&=&\zeta _j (t)\int_{-\infty}^t\frac{1}{{\Delta (s)}}f\big(\gamma(s)\big)\wedge g(\gamma(s),s-\beta) ds\\
&+& f_j\big(\gamma(t)\big)\int\limits_0^{t} \frac{1}{\Delta(s)}g(\gamma(s),s-\beta) \wedge \zeta(s) ds,\hspace{1cm}(j=1,2)
\end{eqnarray*}
\begin{itemize}
\item[\textbf{C6}: \ ]{ $g(x,t)$ is not $t$-constant in a neighborhood of $\gamma(t)$, that is , there exists an open subset  $U\subseteq\mathbb{R}^2$ containing $\gamma(t)$ such that for $(x,t_1),(x,t_2)\in U\times\mathbb{R}$, $t_1\neq t_2$ implies $g(x,t_1)\neq g(x,t_2)$.} 
\end{itemize}

\begin{thm}\label{th3}
 If the conditions \textbf{C3}, \  (\textbf{C4} or \textbf{C4$^{'}$}), \textbf{C5} and \textbf{C6} hold then  depending on the sign of $0\!\leq\!|\epsilon|\!\!<\!\!<\!\!1$,  (\ref{eq2})  has  two bounded solutions $x_1(t)$ and $x_2(t)$ near $ \gamma(t)$. Furthermore, $\|x_{1,2}-\gamma\|\rightarrow0$ as $\epsilon\rightarrow0$.
\end{thm}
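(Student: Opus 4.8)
The plan is to carry the Lyapunov--Schmidt reduction that underlies Theorem~\ref{th2} one order further in $\epsilon$, so that the scalar bifurcation equation becomes \emph{quadratic} in the reduction variable; the claimed pair of bounded solutions then appears as the two roots of that quadratic for an appropriate sign of $\epsilon$. First I would fix the fundamental system $\{f(\gamma(t)),\zeta(t)\}$ of the variational equation $\dot{x}=Df(\gamma(t))\,x$, where $f(\gamma(t))=\gamma'(t)$ is the bounded solution (it decays at $\pm\infty$ because $x_0=0$ is a saddle with eigenvalues $\pm\omega$) and $\zeta(t)$ is the complementary \emph{unbounded} solution, their Wronskian being $\Delta(t)$. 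Seeking a solution as a $\beta$-shifted copy of $\gamma$ plus a correction and applying variation of parameters, I would impose that the correction remain bounded; this forces a Fredholm-type solvability condition in which the component of the forcing along $\zeta$ must be killed by integrating against $f(\gamma)/\Delta$. Exactly this requirement selects the scalar $\kappa_{1}$ and yields the first-order correction, whose bounded and particular parts are the quantities $\kappa_{2j},\kappa_{3j}$ appearing in the statement.

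Next I would substitute this first-order solution back into the equation and extract the $\epsilon^{2}$ part of the solvability condition. Two families of quadratic terms arise: the Hessians $D^{2}_{x_ix_j}f_k(\gamma)$ acting on the first-order correction (this is where the combinations $\kappa_{1}\gamma_j+\kappa_{2j}+\kappa_{3j}$ enter) and the gradients $\nabla g_k(\gamma,\cdot-\beta)$ of the perturbation. Assembling these into the vector fields $\tilde{F}_1,\tilde{F}_2$, forming the wedge $\tilde{F}_2\wedge\tilde{F}_1$ and weighting by $f_1(\gamma)f_2(\gamma)/\big(\Delta\,\Delta\big)$ reproduces, schematically, the functional $\mathcal{F}_5$ of \textbf{C5}; the remaining purely geometric and $g$-linear contributions at the same order collapse into $\mathcal{F}_3$ and the three integrals of \textbf{C4} (or, exchanging the roles of the two components, \textbf{C4$^{'}$}). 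The outcome is a reduced equation whose lowest-order part has the saddle-node shape
\begin{equation*}
\mathcal{B}(u,\epsilon)=\epsilon\,c_0+c_2\,u^{2}+o\big(|\epsilon|+u^{2}\big)=0,
\end{equation*}
where $u$ is the reduction variable, $c_0$ is built from $\mathcal{F}_{4,1}(\beta)$ and $\mathcal{F}_3$, and $c_2$ is the nonzero constant supplied by $\mathcal{F}_5$ (with $\mathcal{F}_{4,2},\mathcal{F}_{4,3}$ guaranteeing that the second-order coefficient does not degenerate). The hypotheses make $c_0,c_2\neq0$, so $\mathcal{B}=0$ has the two roots $u=\pm\sqrt{-\epsilon c_0/c_2}+\cdots$ exactly when $-\epsilon c_0/c_2>0$, i.e. for one sign of $\epsilon$; each root determines a bounded solution $x_{1,2}(t)$, and $u\to0$ as $\epsilon\to0$ gives $\|x_{1,2}-\gamma\|\to0$. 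Here \textbf{C6} is used to ensure that the genuine $t$-dependence of $g$ makes the two solutions distinct and the bifurcation non-degenerate (without it the shift $\beta$ would be inessential and the construction would collapse to a trivial continuum).

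The main obstacle I anticipate is twofold. The analytic difficulty is controlling the unbounded mode $\zeta$: the variation-of-parameters integrals are taken against a growing solution, so one must verify that the choice of $\kappa_{1}$ (and of the integration limits in $\kappa_{2j},\kappa_{3j}$) genuinely cancels the exponential growth and leaves a bounded correction, uniformly for small $\epsilon$; this is where the saddle spectral assumption $\pm\omega\neq0$ and the weight $1/\Delta(t)$ are essential. The bookkeeping difficulty is matching the abstract second-order solvability functional to the explicit double integrals $\mathcal{F}_3$ and $\mathcal{F}_5$ --- in particular tracking how the first-order coefficients $\kappa$ propagate through the Hessian terms into $\tilde{F}_1,\tilde{F}_2$ --- and then confirming that the resulting quadratic is non-degenerate, with discriminant of definite sign in $\epsilon$, rather than collapsing to a single root or a continuum, which is precisely what conditions \textbf{C3}--\textbf{C6} are designed to rule out.
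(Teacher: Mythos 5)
A preliminary point you could not have known: the paper contains no proof of Theorem~\ref{th3} --- the text under ``Proof of theorem \ref{th3}'' says only that the proof is under construction --- so your attempt can only be measured against the framework the paper does set up: the ansatz $x(t-\beta)=\alpha\gamma(t)+z(t)$, the projection $p$ in \eqref{eq14}, and the scalar bifurcation function $B(\xi,\alpha,\beta,\epsilon)$ in \eqref{eq15a}. Within that framework your overall strategy --- push the Lyapunov--Schmidt reduction to second order and read off a saddle--node normal form $\epsilon c_0+c_2u^2+o(|\epsilon|+u^2)=0$ whose two roots exist for one sign of $\epsilon$ --- is the natural one, and it is consistent with the phrase ``depending on the sign of $\epsilon$'' and with the contrast to Theorem~\ref{thm4} (where \textbf{C3} fails and the picture becomes transcritical).

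There are nonetheless concrete gaps. First, $B=0$ is one scalar equation in \emph{three} unknowns $(\xi,\alpha,\beta)$ plus the parameter $\epsilon$, not one equation in one reduction variable $u$; you never say which variables are eliminated and which one carries the quadratic. The hypotheses answer this and your sketch does not exploit it: $\mathcal{F}_3$, $\mathcal{F}_{4,2}$ and $\mathcal{F}_5$ are double integrals of the form ${\int\!\!\int}\frac{f_if_j}{\Delta(t)\Delta(s)}\bigl[U(s,t)\wedge V(s,t)\bigr]\,ds\,dt$ with the two wedge factors evaluated at different times, so each factors into a $2\times2$ determinant of single integrals; such determinants are Jacobians of a two-component map, which indicates that the intended argument first solves two relations (say for $\alpha$ and one of $\xi,\epsilon$) by the implicit function theorem --- nonvanishing of $\mathcal{F}_3$ or $\mathcal{F}_{4,2}$ being exactly invertibility of that Jacobian --- and only then gets a quadratic in the remaining variable governed by $\mathcal{F}_5$. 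Your assignment of $c_0$ to ``$\mathcal{F}_{4,1}$ and $\mathcal{F}_3$'' and $c_2$ to $\mathcal{F}_5$ is asserted ``schematically'' and never checked against an actual expansion of $B$; in particular you give no role to $\mathcal{F}_{4,3}$, even though $\langle\nabla f_k(\gamma),\gamma\rangle-f_k(\gamma)$ is precisely $\partial_\alpha\bigl[f_k(\alpha\gamma)-\alpha f_k(\gamma)\bigr]\big|_{\alpha=1}$ and so $\mathcal{F}_{4,3}$ is an $\alpha$-derivative of $B$ that must enter the elimination step. Second, a root of the reduced equation is a triple $(\xi,\alpha,\beta)$, so two roots could a priori give the same orbit of \eqref{eq2} up to the time shift $\beta$; \textbf{C6} is what excludes this, but your appeal to it is a sentence of intention rather than an argument that the two solutions are distinct. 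Third, the uniform boundedness of the correction built from the unbounded mode $\zeta$ is flagged as a difficulty but not resolved; the paper supplies the tools (Lemma~\ref{the1}, the representation \eqref{o3new}--\eqref{o4new}, and the dichotomy estimates \eqref{eq6.11}--\eqref{eq6.12}), and a complete proof must invoke them explicitly. As it stands the proposal is a plausible roadmap aligned with the paper's setup, not a proof.
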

\begin{thm}\label{thm4}
If the condition \textbf{C3} fails and the conditions (\textbf{C4} or
\textbf{C$^{'}$4}), \textbf{C5}  and \textbf{C6} hold then independent of the  sign of  $0\!\leq\!|\epsilon|\!\!<\!\!<\!\!1$,  (\ref{eq2})  has two bounded solutions $ x_1(t) $ and $ x_2(t) $ near $ \gamma(t) $ such that $\|x_{1,2}-\gamma\|\rightarrow0$ as $\epsilon\rightarrow0$.
\end{thm}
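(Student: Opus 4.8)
The plan is to derive the two solutions from a Lyapunov–Schmidt reduction of (\ref{eq2}), set up exactly as for Theorems \ref{th2} and \ref{th3}, and then to read the bifurcation off the \emph{degenerate} normal form that arises once \textbf{C3} is removed. First I would seek a bounded solution $x(t)=\gamma(t)+y(t)$, substitute it into (\ref{eq2}), and handle the linear part through the variational equation $\dot y=Df(\gamma(t))\,y+(\text{forcing})$. Its bounded solution is $\gamma'(t)=f(\gamma(t))$ and its unbounded companion is $\zeta(t)$, with Wronskian $\Delta(t)$; solving $y$ by variation of parameters on the frame $\{\gamma',\zeta\}$, the requirement that $x$ stay bounded forces the coefficient of the growing mode $\zeta$ to vanish. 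This single scalar solvability condition is the bifurcation equation $\Phi(u,\epsilon)=0$, in which $u$ is the amplitude of $y$ along the kernel direction $\gamma'$ and the fixed phase $\beta$ of the forcing $g(\cdot,\,\cdot-\beta)$ is carried through \textbf{C4}, \textbf{C4$'$} and \textbf{C5}.

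Carrying the reduction to second order — which is exactly where the first–order corrections $\kappa_1,\kappa_{2j},\kappa_{3j}$ enter as the components of $y$ along $\gamma'$ and $\zeta$ — and localizing at the bifurcation point $(u,\epsilon)=(0,0)$, I would bring $\Phi$ to the quadratic normal form
\begin{equation*}
\Phi(u,\epsilon)=\mathcal{F}_5\,u^{2}+\mathcal{A}\,\epsilon\,u+\mathcal{F}_3\,\epsilon+o\big(u^{2}+|\epsilon|\big),
\end{equation*}
where $\mathcal{F}_5$ (condition \textbf{C5}) is the quadratic coefficient, $\mathcal{F}_3$ (condition \textbf{C3}) is the $u$–independent coefficient, and the cross–term coefficient $\mathcal{A}$ is assembled from the \textbf{C4}/\textbf{C4$'$} quantities $\mathcal{F}_{4,1},\mathcal{F}_{4,2},\mathcal{F}_{4,3}$. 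Viewed as a quadratic in $u$ its discriminant is $\mathcal{A}^2\epsilon^2-4\mathcal{F}_5\mathcal{F}_3\,\epsilon$. In Theorem \ref{th3}, where $\mathcal{F}_3\neq0$, this is dominated for small $\epsilon$ by $-4\mathcal{F}_5\mathcal{F}_3\,\epsilon$, so two real roots exist only for the sign of $\epsilon$ making $\mathcal{F}_5\mathcal{F}_3\,\epsilon<0$ — the one–sided saddle–node.

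Under the present hypotheses \textbf{C3} fails, so $\mathcal{F}_3=0$: the $u$–independent term disappears and the discriminant becomes $\mathcal{A}^{2}\epsilon^{2}>0$ for every small $\epsilon\neq0$. Dividing by $\mathcal{F}_5\neq0$, the normal form factors as $u\big(u+(\mathcal{A}/\mathcal{F}_5)\epsilon\big)=o(\cdot)$, the transcritical configuration: for either sign of $\epsilon$ it carries two simple roots $u_1(\epsilon)\to0$ and $u_2(\epsilon)\sim-(\mathcal{A}/\mathcal{F}_5)\,\epsilon$, hence two distinct bounded solutions $x_1(t),x_2(t)$ with $\|x_{1,2}-\gamma\|\to0$ as $\epsilon\to0$. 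To make each branch rigorous I would apply the implicit function theorem to the two factors separately — equivalently, a blow–up $u=\epsilon v$ turning each factor into a graph over $\epsilon$ — the non-degeneracy being supplied by \textbf{C5} together with \textbf{C4} or \textbf{C4$'$}; condition \textbf{C6} guarantees that $g$ genuinely depends on $t$, so that $\mathcal{A}\neq0$ and the two branches are distinct.

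The main obstacle is the second–order bookkeeping behind the displayed normal form rather than the algebra of the quadratic. I must show that the corrections $\kappa_1,\kappa_{2j},\kappa_{3j}$ are precisely the pieces that assemble into $\mathcal{F}_5$ and $\mathcal{A}$, that no hidden $u$–independent term of order $\epsilon$ re-enters from these corrections once $\mathcal{F}_3=0$, and that the remainder $o(u^{2}+|\epsilon|)$ is uniform on the relevant range of $(u,\epsilon)$ so that the factorization — and with it the separation of the two branches for \emph{both} signs of $\epsilon$ — survives. This uniform control of the second–order term is what distinguishes the two–sided transcritical picture here from the one–sided saddle–node of Theorem \ref{th3}.
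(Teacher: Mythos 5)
Your overall strategy -- Lyapunov--Schmidt reduction to a scalar bifurcation equation, Taylor expansion to second order, and a discriminant analysis in which the failure of \textbf{C3} converts a one-sided saddle--node into a two-sided pair of branches -- is the right genre of argument, and it is consistent with the machinery the paper sets up in Sections 2--4. (The paper's own proofs of Theorems \ref{th2}--\ref{thm4} are omitted, so I am comparing against the reduction it constructs and the structure of its hypotheses.) However, there are concrete gaps.

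First, your normal form misidentifies the roles of the constants. The paper's unknowns after reduction are $(\xi,\alpha,\beta)$ from the ansatz $x(t-\beta)=\alpha\gamma(t)+z(t)$, not a single amplitude $u$, and the quantities $\mathcal{F}_3$, $\mathcal{F}_{4,2}$ and $\mathcal{F}_{5}$ are each of the form $\int\!\!\int \frac{f_2(\gamma(s))f_1(\gamma(t))}{\Delta(s)\Delta(t)}[\,\cdot\wedge\cdot\,]\,ds\,dt$, i.e.\ differences of products of single integrals -- they are $2\times2$ Jacobian or Hessian \emph{determinants} of the bifurcation function with respect to two of the parameters (note that $\langle\nabla f_k(\gamma),\gamma\rangle-f_k(\gamma)$ is exactly $\partial_\alpha\big[f_k(\alpha\gamma)-\alpha f_k(\gamma)\big]\big|_{\alpha=1}$, and $\mathcal{F}_{4,1}$ is a $\beta$-derivative of the $\epsilon$-order term). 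A determinant cannot be the $u$-independent coefficient of $\epsilon$ in a scalar expansion; that coefficient is the single integral $\mathcal{F}_1'$ of \textbf{C1$'$}. So ``\textbf{C3} fails $\Rightarrow$ the constant term vanishes'' does not follow, and the displayed quadratic $\mathcal{F}_5u^2+\mathcal{A}\epsilon u+\mathcal{F}_3\epsilon$ does not match the objects the hypotheses actually constrain. The argument has to be run as a two-parameter (in $(\alpha,\beta)$, say) problem in which \textbf{C4}/\textbf{C4$'$} let you solve for one parameter by the implicit function theorem and \textbf{C3}, \textbf{C5} govern the nondegeneracy of what remains.

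Second, even granting your one-variable normal form, the two-sided conclusion is not secured. With the $\epsilon$-linear constant term removed you still face an $O(\epsilon^2)$ constant term $c\,\epsilon^2$; after the blow-up $u=\epsilon v$ the equation becomes $\mathcal{F}_5v^2+\mathcal{A}v+c+o(1)=0$, and two real branches for \emph{both} signs of $\epsilon$ require $\mathcal{A}^2-4\mathcal{F}_5c>0$, not merely $\mathcal{A}\neq0$. Your clean factorization $u\big(u+(\mathcal{A}/\mathcal{F}_5)\epsilon\big)$ presupposes $c=0$, which is unjustified: there is no trivial persisting branch here, since $\alpha=1$, $z=0$ is not a solution of the perturbed system. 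Finally, you assign \textbf{C6} the job of making $\mathcal{A}\neq0$; in the paper \textbf{C6} serves a different purpose, namely ruling out that the two bounded solutions produced are time-translates of one another (see the discussion in the Conclusion), so distinctness of the branches still needs a separate argument in your scheme.
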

\section{Lyapunov-Schmidt reduction}
Here we explain the  Lyapunov-Schmidt reduction method in brief.  Consider the sufficiently differentiable map
\[
\varphi:\mathbb{R}^n \times \mathbb{R} \longrightarrow \mathbb{R}^n
,\hspace{1cm}\varphi:(x,\epsilon) \longmapsto \varphi (x,\epsilon).
\]
We denote the first order derivative of $\varphi$ w.r.t.  $ x $  by $ D_x \varphi $. Assume that the unperturbed map $ x \mapsto \varphi (x,0) $ has a zero $ x=x_0 $, i.e. $\varphi(x_0,0)=0$. The question is whether the perturbed map has a zero too? One of the main tools for answering this question is the implicit function theorem; but,  if the conditions of the implicit
function theorem do not hold then the Lyapunov-Schmidt reduction is an effective tool.

Let $N(L)$ be the kernel of $ L:=D_x\varphi (x_0,0)$ with  $dimN(L)=k>0 $ and a complementary subspace $N^\perp (L) $.
Also suppose that $ p:\mathbb{R}^n  \rightarrow R(L) $ is a projection with complement $ I-p:\mathbb{R}^n  \rightarrow R^\perp (L) $. Here $ R(L) $
and $ R^\perp (L) $ are respectively the range of $ L $ and a complementary subspace of $R(L)$.
Since $ \mathbb{R}^n=N(L)\oplus N^\perp (L) $ so we can decompose $x\in\mathbb{R}^n$  to its components $\xi\in N(L)\approx\mathbb{R}^k$ and $\eta\in N^\perp(L)\approx\mathbb{R}^{n-k}$, hence we have $x=\xi+\eta$.  Now consider the map
\[
\upsilon:\mathbb{R}^k \times \mathbb{R}^{n-k} \times \mathbb{R} \rightarrow R(L),\hspace{0.5cm}
\upsilon:(\xi ,\eta,\epsilon)\mapsto p \varphi (\xi +\eta+x_0,\epsilon).
\]
Since $\upsilon(0,0,0)=0$ and $pL=D_\eta \upsilon(0,0,0) :  N^\perp (L)  \rightarrow R(L) $ is an isomorphism, so the implicit function theorem implies that there are sufficiently small neighborhoods $A$ of $ (\xi,\epsilon)=(0,0)$,  $B$ of $\eta=0$ and a $C^2$  map
$\eta: A\rightarrow B$
such that $\eta(0,0)=0$ and $ \eta=\eta(\xi, \epsilon) $ is the unique solution of
\[\upsilon(\xi,\eta,\epsilon)=p \varphi (\xi + \eta+x_0,\epsilon)=0.\]
Therefore, the Lyapunov-Schmidt reduction method reduces the problem of finding zeros for $ \varphi $ to finding zeros of the function
\[
\tau:\mathbb{R}^k \times \mathbb{R}  \rightarrow R^\perp (L),\hspace{0.5cm}
\tau:(\xi,\epsilon)\mapsto(I-p)\varphi(\xi+\eta(\xi, \epsilon)+x_0, \epsilon).
\]

Note that $ \tau(0,0)=0 $;
hence, if there exists a function $ \xi (\epsilon) $ such that $\xi(0)=0$ and $ \tau(\xi(\epsilon),\epsilon)=0 $ then $ x(\epsilon)= \xi(\epsilon)+\eta\big(\xi(\epsilon),\epsilon\big)+x_0$ is a solution for $\varphi(x,\epsilon)=0$.
The function  $ \tau $ is called a bifurcation function for $\varphi(x,\epsilon)=0$. Note that $x=x(\epsilon) $ could not be obtained directly by  the implicit function theorem, because $L:=D_x\varphi (x_0,0)$ has a nontrivial kernel  (see \cite{Chicone}  for more details and \cite{Hale} for development of the theory over Banach spaces).

\section{Variational equation}\label{1}
In this section,  we consider the variational equation of (\ref{eq2}) along $\gamma(t)$, i.e. $\dot{z}=A(t)z$ with $A(t)=Df\big(\gamma(t)\big)$. We will look for bounded solutions of the equation
\begin{equation}\label{eq5}
\dot{z}-A(t)z=F(t), \hspace{1cm} F=(F_1,F_2) \in C_b^0 (\mathbb{R},\mathbb{R}^2).
\end{equation}
Here $ C_b^{0} (\mathbb{R},\mathbb{R}^2)$ shows the Banach space of bounded continuous functions from $\mathbb{R}$ to $\mathbb{R}^{2} $ with
$ \left\| {F} \right\| =\mathop {\sup }\limits_{t \in \mathbb{R}} \left| {F(t)} \right| $.
\begin{lem}\label{lem1}
Let $X(t)$ be a Fundamental matrix of the variational equation $\dot{z}=A(t)z$ and $\Delta(t)=\det X(t)$. Then $\Delta(t)$ is bounded with $\Delta(+\infty)=\Delta(-\infty)$.
\end{lem}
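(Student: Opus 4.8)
The engine of the proof is the Abel--Liouville formula. The plan is to write, for any fundamental matrix $X(t)$ of $\dot z=A(t)z$ with $A(t)=Df\big(\gamma(t)\big)$,
\[
\Delta(t)=\Delta(0)\exp\Big(\int_0^t \operatorname{tr}A(s)\,ds\Big),\qquad \operatorname{tr}A(s)=\operatorname{tr}Df\big(\gamma(s)\big),
\]
and to note first that the whole statement is independent of the choice of $X$: two fundamental matrices differ by a constant invertible factor, which multiplies $\Delta$ by a nonzero constant and so affects neither boundedness nor the equality of the two limits. Thus I am free to work with the convenient choice $X(t)=\big[\,\dot\gamma(t)\mid\zeta(t)\,\big]$, where $\dot\gamma=f(\gamma)$ is the bounded solution of the variational equation and $\zeta$ is the unbounded one, so that $\Delta$ coincides with $\gamma_1'\zeta_2-\gamma_2'\zeta_1$.

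Next I would control the integrand. Since $\gamma$ is homoclinic to $x_0=0$, we have $\gamma(s)\to 0$ as $s\to\pm\infty$, so by continuity $\operatorname{tr}A(s)\to\operatorname{tr}Df(0)$; because the eigenvalues at $0$ are $\pm\omega$, we get $\operatorname{tr}Df(0)=\omega+(-\omega)=0$. Moreover $0$ is a hyperbolic saddle, hence $\gamma(s)$ approaches it exponentially, $|\gamma(s)|\le K e^{-\omega|s|}$ for large $|s|$; as $f\in C^2$ the map $Df$ is locally Lipschitz, whence $|\operatorname{tr}A(s)|=\big|\operatorname{tr}Df(\gamma(s))-\operatorname{tr}Df(0)\big|\le C|\gamma(s)|$ decays exponentially. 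Consequently $\int_0^{\pm\infty}\operatorname{tr}A(s)\,ds$ converge absolutely, the limits $\Delta(\pm\infty)=\Delta(0)\exp\big(\int_0^{\pm\infty}\operatorname{tr}A\big)$ exist and are finite and nonzero, and $\Delta$, being continuous with finite limits at $\pm\infty$, is bounded on $\mathbb{R}$. This settles the boundedness assertion cleanly.

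The equality $\Delta(+\infty)=\Delta(-\infty)$ is where I expect the real work to lie, because the Liouville formula reduces it to the single identity $\int_{\mathbb{R}}\operatorname{tr}Df\big(\gamma(s)\big)\,ds=0$, and the integrand $\operatorname{div}f(\gamma)$ need not vanish along the orbit, only at its endpoints. The route I would pursue exploits that both ends return to the \emph{same} equilibrium with \emph{symmetric} eigenvalues. Concretely, I would compute the exponential asymptotics of the two columns of $X$: at $+\infty$ the orbit enters along $W^s$ so $\dot\gamma\sim e^{-\omega t}v_s$ while the unbounded $\zeta\sim e^{+\omega t}v_u$, and at $-\infty$ the roles of the eigendirections swap; substituting into $\det X$ one checks that the $e^{\pm\omega t}$ factors cancel at each end, so that $\Delta(\pm\infty)$ reduce to finite expressions governed solely by the common linearization $Df(0)$. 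The delicate step — and the one I would spend the most care on — is showing that the finite ``excursion'' contribution is seen identically from either end, i.e. that no net logarithmic area distortion is accumulated around the loop; here I would try to realize $\operatorname{div}f(\gamma(s))$ as a total time–derivative $\tfrac{d}{ds}\Psi(\gamma(s))$ of a primitive $\Psi$ defined near the orbit (as happens, for instance, with $\Psi=\log\mu$ when $f$ is a positive time–reparametrization $\mu\,f_{\mathrm{Ham}}$ of a Hamiltonian field), so that the integral telescopes to $\Psi(0)-\Psi(0)=0$ thanks to $\gamma(\pm\infty)=0$. Justifying the existence of such a primitive globally, rather than merely along the orbit, is the crux on which the equality rests.
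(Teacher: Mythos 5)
Your treatment of the boundedness claim is correct and in fact more self-contained than the paper's: the Abel--Liouville formula together with $\operatorname{tr}Df(0)=\omega+(-\omega)=0$ and the exponential approach of $\gamma(s)$ to the hyperbolic saddle gives absolute convergence of $\int_0^{\pm\infty}\operatorname{tr}Df\big(\gamma(s)\big)\,ds$, hence finite nonzero limits $\Delta(\pm\infty)$ and boundedness of the continuous function $\Delta$. The remark that the statement is independent of the choice of fundamental matrix is also correct. The paper instead gets both existence of the limits and boundedness from the asymptotic normalization in Gruendler's Lemma 1, so on this half your route is genuinely different and more elementary.

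The equality $\Delta(+\infty)=\Delta(-\infty)$, however, is not proved. Your reduction of it to $\int_{\mathbb{R}}\operatorname{tr}Df\big(\gamma(s)\big)\,ds=0$ is exact, but this integral is precisely the first separatrix (saddle) quantity of the homoclinic loop, which for a non-conservative planar field need not vanish; it cannot follow from soft considerations about the endpoints alone. Your first route (matching exponential asymptotics of the two columns of $X$) only re-establishes that each limit $\Delta(\pm\infty)$ exists, which you already had from the Liouville formula; it does not show the two limiting matrices coincide. That is exactly the step the paper takes from Gruendler: a single constant invertible matrix $C$ with $X(t)\operatorname{diag}(e^{\omega t},e^{-\omega t})\to C$ as $t\to+\infty$ \emph{and} $X(t)\operatorname{diag}(e^{-\omega t},e^{\omega t})\to C$ as $t\to-\infty$, whence $\Delta(\pm\infty)=\det C$ on both sides. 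Your second route is circular: a primitive with $\operatorname{div}f\big(\gamma(s)\big)=\frac{d}{ds}\Psi\big(\gamma(s)\big)$ always exists along the orbit (just integrate in $s$), but for $\Psi$ to be a single-valued function of position near the closed loop, continuous at $x_0=0$, one needs the total integral around the loop to vanish --- which is the very statement to be proved. You correctly identify this as the crux, but as written the proposal establishes only the boundedness assertion and leaves the equality open.
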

\begin{proof}
It is enough to show that $\dot{z}=A(t)z$ has a fundamental matrix with bounded determinate and $\Delta(+\infty)=\Delta(-\infty)$. From \cite[lemma 1]{Gruendler1995}, there exist a fundamental matrix $X(t)$  and an invertible constant matrix $C$ such that
\begin{equation}\label{omid1}\lim_{t\rightarrow+\infty}X(t)\left(\begin{array}{lr}
e^{\omega t}&0\\
0&e^{-\omega t}\end{array}\right)=C.\end{equation}
We may complete the steps of the proof of the lemma to verify that
\begin{equation}\label{omid2}\lim_{t\rightarrow-\infty}X(t)\left(\begin{array}{lr}
e^{-\omega t}&0\\
0&e^{\omega t}\end{array}\right)=C.\end{equation}
Thus we have
\[\Delta(+\infty)=
\lim_{t\rightarrow+\infty}\det X(t)=\det C =\lim_{t\rightarrow-\infty}\det X(t)=\Delta(-\infty).\]
 \qed
\end{proof}
Since  $ \gamma^{'}(t) $ is a solution of  $\dot{z}=A(t)z$ so we can find a fundamental matrix $X(t)$  as below
\begin{equation}\label{eq6}
X(t)=\left(
\begin{array}{lr}
 \gamma_{1}^{'}(t) \ \ & \ \  \zeta _{1}(t) \\
 \gamma_ {2}^{'}(t)  \ \ & \ \ \zeta _{2}(t) \\
\end{array}\right)
\end{equation}
with  $ X(0)=Id $. Here $ \zeta(t)=\big(\zeta _{1}(t),\zeta _{2}(t)\big) $ is any solution independent of $\gamma^{'}(t)$; hence both $\zeta_1(t)$ and $\zeta_2(t)$ are unbounded as $t\rightarrow\pm\infty$.
Thus,  from (\ref{omid1}) and (\ref{omid2}), when $t\rightarrow\pm\infty$, we have
\begin{equation}
\label{o1new}
\gamma^{'}(t)e^{+ \omega |t| }  \to constant, \hspace{1cm}
\zeta(t)e^{- \omega |t| }  \to constant.
\end{equation}
Furthermore, there exists $ k>0 $ such that
\begin{eqnarray}
\label{eq6.11}
&\text{for}& \ \  t\geq s \geq 0, \hspace{0.5cm} |\gamma _i^{'} (t)\zeta _j (s)| < k \exp\big(- \omega (t - s)\big),  \hspace{0.4cm} i,j = 1,2, \\
\label{eq6.12}&\text{for}& \ \  t\leq s \leq 0, \hspace{0.5cm} |\gamma _i^{'} (t)\zeta _j (s)| < k \exp\big(\omega (t - s)\big),  \hspace{0.8cm}  i,j = 1,2.
\end{eqnarray}

\begin {lem}\label{the1}
The system (\ref{eq5}) has a bounded solution Iff
\begin{equation}\label{eq17}
\int  \limits_ {\mathbb{R}}
\frac{1}{{\Delta (s)}} f\big(\gamma(s)\big)\wedge F(s)ds=0.
\end{equation}
\end{lem}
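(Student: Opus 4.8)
The plan is to solve (\ref{eq5}) explicitly by variation of parameters using the fundamental matrix $X$ from (\ref{eq6}) and then read off exactly when the resulting formula can be kept bounded. Writing the general solution as $z(t)=X(t)\big(c+\int_0^t X^{-1}(s)F(s)\,ds\big)$ and using $\det X(s)=\Delta(s)$ together with the fact that the first column of $X$ is $\gamma'(s)=f(\gamma(s))$, a direct computation gives
\[
X^{-1}(s)F(s)=\frac{1}{\Delta(s)}\begin{pmatrix}-\,\zeta(s)\wedge F(s)\\[2pt] \gamma'(s)\wedge F(s)\end{pmatrix}.
\]
Hence every solution of (\ref{eq5}) has the form
\[
z(t)=\gamma'(t)\Big(c_1+\int_0^t a(s)\,ds\Big)+\zeta(t)\Big(c_2+\int_0^t b(s)\,ds\Big),
\]
with $a(s)=-\,\zeta(s)\wedge F(s)/\Delta(s)$, $\,b(s)=\gamma'(s)\wedge F(s)/\Delta(s)$, and $c_1,c_2\in\mathbb{R}$ free. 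The whole question reduces to choosing $c_1,c_2$ so that $z$ stays bounded as $t\to\pm\infty$.

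The decisive point is the asymmetry between the two directions. By (\ref{o1new}) and Lemma \ref{lem1}, $\gamma'(s)$ decays like $e^{-\omega|s|}$ while $\Delta$ is bounded away from $0$, so $b$ is \emph{absolutely integrable} on $\mathbb{R}$; by contrast $\zeta(s)$ grows like $e^{\omega|s|}$, so $a$ is not integrable and $\int_0^t a$ grows like $e^{\omega|t|}$. For the $\gamma'$-term this growth is harmless: pairing $|\gamma'(t)|\sim e^{-\omega|t|}$ against $\int_0^t a$ via the dichotomy bounds (\ref{eq6.11})--(\ref{eq6.12}), the product $\gamma'(t)\int_0^t a(s)\,ds$ stays bounded for every choice of $c_1$. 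The $\zeta$-term is the delicate one: since $\zeta$ grows at \emph{both} ends and its limiting direction is independent of that of $\gamma'$ (the matrix $C$ in (\ref{omid1}) is invertible), no cancellation against the $\gamma'$-term is possible, so $z$ can be bounded only if the bracket $c_2+\int_0^t b$ decays like $e^{-\omega|t|}$ as $t\to\pm\infty$.

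Because $b$ is integrable, boundedness of the $\zeta$-term as $t\to+\infty$ forces $c_2=-\int_0^{+\infty}b$, turning the bracket into $-\int_t^{\infty}b$, which does decay like $e^{-\omega t}$. With this unique choice the same bracket equals $\int_{-\infty}^{t}b$ only when $\int_{\mathbb{R}}b=0$; otherwise it tends to the nonzero constant $-\int_{\mathbb{R}}b$ as $t\to-\infty$ and $\zeta(t)$ multiplies it into an unbounded term. Since $b=\tfrac{1}{\Delta}\,f(\gamma)\wedge F$, this gives both implications at once: when (\ref{eq17}) holds the displayed $z$ (with this $c_2$ and any $c_1$) is bounded, and when it fails no choice of constants produces a bounded solution. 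For a conceptually cleaner proof of necessity I would instead pass to the adjoint equation $\dot{\psi}=-A(t)^{T}\psi$: its (up to scale) bounded solution is $\psi(t)=\tfrac{1}{\Delta(t)}\big(-\gamma_2'(t),\,\gamma_1'(t)\big)$, and for any solution $z$ of (\ref{eq5}) one computes $\tfrac{d}{dt}\langle\psi,z\rangle=\langle\psi,F\rangle=\tfrac{1}{\Delta}\,f(\gamma)\wedge F$; if $z$ is bounded then $\langle\psi,z\rangle\to0$ at $\pm\infty$ because $\psi$ decays, and integrating over $\mathbb{R}$ yields (\ref{eq17}).

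The main obstacle is the exponential bookkeeping of the second and third paragraphs: one must check, uniformly in $t$, that the decaying factor exactly absorbs the growing accumulated integral in the $\gamma'$-direction and that the chosen bracket in the $\zeta$-direction decays at the matching rate at both infinities. This is precisely where the quantitative estimates (\ref{o1new}), (\ref{eq6.11}) and (\ref{eq6.12}) must be invoked, rather than the mere signs of the eigenvalues $\pm\omega$.
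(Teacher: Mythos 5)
Your proof is correct: the variation-of-parameters computation, the forced choice $c_2=-\int_0^{\infty}b$, and the matching condition as $t\to-\infty$ yield exactly the criterion (\ref{eq17}), and your resulting formula for $z$ reproduces verbatim the bounded solutions (\ref{o3new})--(\ref{o4new}) that the paper states as a direct conclusion of the lemma. The paper's own proof is omitted (``under construction''), but since those displayed formulas are precisely what your argument produces, your route is evidently the intended one; the adjoint-equation computation you add for necessity is a valid and cleaner alternative for that direction.
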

\begin{proof}
: The proof is under construction and will be appear after publishing.
\end{proof}
A direct conclusion from the above lemma is that  bounded solutions of (\ref{eq5}) is
obtained by 
\begin{eqnarray}
\label{o3new}
z_1(t)\!&=&\!\gamma_1^{'}(t)\Big(x_2\!-\!\!\int_0^t\!\frac{\zeta(s)}{\Delta(s)}\wedge F(s)ds\Big)
\!\!+\!\zeta_1(t)\!\!\int_{-\infty}^t\!\frac{\gamma^{'}(s)}{\Delta(s)}\wedge F(s)ds.\\
z_2(t)\!&=&\!\gamma_2^{'}(t)\Big(x_2\!-\!\!\int_0^t\!\frac{\zeta(s)}{\Delta(s)}\wedge F(s)ds\Big)\!\!+\!\zeta_2(t)\!\!\int_{-\infty}^t\!\frac{\gamma^{'}(s)}{\Delta(s)}\wedge F(s)ds.
\label{o4new}
\end{eqnarray}

\section{Bifurcation map and homoclinic bifurcations}
In this section, we consider the equation (\ref{eq2}) and  look for its bounded solutions near $\gamma(t)$. We will find a bifurcation map for the system and will investigate its homoclinic bifurcations. To this end, let $x(t)$ be a solution of (\ref{eq2}) of the form
\begin{equation}\label{eq18bb}
x(t-\beta)=\alpha\gamma(t)+z(t),
\end{equation}
with $\alpha \approx 1$ and   $0\leq|\beta|<\!\!<1$. By replacing $x(t-\beta)$ in (\ref{eq2}) we get
\begin{equation}\label{eq11n}
\dot{z} - A(t)z = F(t,z,\alpha ,\beta ,\epsilon ),
\end{equation}
where $A(t)=Df\big(\gamma(t)\big)$ and
\begin{eqnarray*}
F(t,z,\alpha ,\beta ,\epsilon )&=& f\big(\alpha \gamma(t)\big)+Df\big(\alpha \gamma(t)\big)z-\alpha f\big(\gamma(t)\big)-A(t)z\\
&+ &\epsilon g\big(\alpha \gamma(t)+z,t-\beta\big)+O(z^2,\epsilon z).
\end{eqnarray*}
It is easy to see that $x(t)$ is bounded Iff $z(t)$ is bounded; furthermore, $\|x-\gamma\|=\|z\|$.
Thus, if  $z(t)$ is a nontrivial bounded solution of (\ref{eq11n}) near zero then $x(t)$ is a nontrivial bounded solution of (\ref{eq2}) near $\gamma$.  Because of the exponential dichotomy of the variational equation $\dot{z}=A(t)z$,  the existence of a nontrivial bounded solution for (\ref{eq11n}) which is enough near to zero, equals to the existence of a homoclinic bifurcation for (\ref{eq2}).

On the other hand, from lemma \ref{the1}, the solution $z(t)$ of (\ref{eq11n}) is  bounded  Iff
\[\int_\mathbb{R}\frac{f\big(\gamma(t)\big)}{\Delta(t)}\wedge F(t,z(t),\alpha ,\beta ,\epsilon )dt=0.\]
Thus, in order to have $x(t)$ bounded and near $\gamma(t)$, we must investigate solutions $z(t)$ of (\ref{eq11n}) near zero such that the above integral equality holds. For this purpose, we define the linear projection $p$ as below
\begin{equation}\label{eq14}
p:C_b^0(\mathbb{R}, \mathbb{R}^2) \to C_b^0(\mathbb{R}, \mathbb{R}^2),\hspace{0.5cm}
p:F(t)  \mapsto \frac{\Delta^2(t)}{\|\gamma^{'}\|_2^2}J(t) \int\limits_{\mathbb{R}} {J(s).F(s)ds},
\end{equation}
where
\[ J(s) = \frac{1}{\Delta(s)}\left( \begin{array}{c}
- {{\gamma _2^{'} (s)}} \\
 {{\gamma _1^{'} (s)}} \\
 \end{array}\right).\]
We can also consider (\ref{eq5}) as the operator equation $(Lz)(t)= F(t)$ where
\begin{equation}
\label{eq9}
L:C_b^1(\mathbb{R},\mathbb{R}^2)\rightarrow C_b^0(\mathbb{R},\mathbb{R}^2),
\hspace{1cm}
(Lz)(t)=\dot{z}(t)-A(t)z(t).
\end{equation}
The  lemma \ref{the1} implies that the enough and sufficient condition for a map  $F(t)=\big(F_1(t),F_2(t)\big)\in C^0_b(\mathbb{R},\mathbb{R}^2)$ belongs to $R(L)$ is that $pF(t)=0$, i.e. $R(L)=N(p)$.
Also it is easy to see that
\[N(L)=\{\xi\gamma^{'}(t) \ : \ \xi\in\mathbb{R}\}\hspace{0.3cm}\text{and}\hspace{0.3cm}R(p)=\{\xi\Delta(t)\left(\begin{array}{c}-\gamma_2^{'}(t)\\
\gamma_1^{'}(t)\end{array}\right) \ : \ \xi\in\mathbb{R}\}\]
are one dimensional subspaces, so we can consider $\xi\in\mathbb{R}$ as an element of
$N(L)$ (or $R(p)$). Since $I-p:C_b ^0 (\mathbb{R},\mathbb{R}^2)= N(p)\oplus R^\perp(L)\rightarrow N(p)$ so $ k$, the inverse of $L:N(L)^\perp\rightarrow N(p)$, is a well defined linear isomorphism as below
\[k:(I-p)C_b^0 (\mathbb{R}, \mathbb{R}^2) \rightarrow  N(L)^\perp.\]
This enables us to decompose $z\in C^1_b(\mathbb{R},\mathbb{R}^2)$ as $ z=\xi +\eta \in N(L) \oplus N^\perp (L)$; then by using the Lyapunov-Schmidt reduction, the problem of finding  bounded solutions for (\ref{eq11n}) is equivalent to the solving  of the system of equations
\begin{eqnarray}
\label{eq15}0&=&pF(t,\xi +\eta ,\alpha ,\beta ,\epsilon ),\\
\label{eq16}\eta(t)&=& k(I-p)F(t,\xi +\eta ,\alpha ,\beta ,\epsilon ).
\end{eqnarray}

Let
\[
\begin{array}{l}
G:C^0_b(\mathbb{R},\mathbb{R}^2)\times\mathbb{R}^3\rightarrow N^\perp(L),\\
G(\xi +\eta,\alpha ,\beta ,\epsilon ):=\eta-k(I-p)F(\xi+\eta,\alpha ,\beta ,\epsilon ).\end{array}\]
Then it is easy to check that $ G(0,1,0,0)=0 $ and $ D_{\eta}G(0,1,0,0)=I $. Hence, by applying the implicit function theorem,
there exist a neighborhood $U$ of $(0,1,0,0)\in N(L)\times\mathbb{R}^3$ and a unique map $ \eta:U\rightarrow N^\perp(L)$ such that $ \eta(0,1,0,0)=0$ and $\eta(\xi,\alpha ,\beta ,\epsilon )$ is the unique solution of (\ref{eq16})  in $U$.
 Replacing $\eta(\xi,\alpha ,\beta ,\epsilon )$ in (\ref{eq15}), finally we obtain the bifurcation function as below
 \[ pF \big(t,\xi+\eta(\xi,\alpha ,\beta ,\epsilon ),\alpha ,\beta ,\epsilon \big)=0 \]
 or equivalently
\begin{equation}\label{eq15a}
B(\xi,\alpha ,\beta ,\epsilon )=\int_\mathbb{R}J(s)F\big(s,\xi+\eta(\xi,\alpha ,\beta ,\epsilon ),\alpha ,\beta ,\epsilon \big)ds =0.
\end{equation}
It is useful to note that, by differentiating from \eqref{eq16} w.r.t. $\xi$ we find that 
\[D_\xi\eta=[I-k(I-p)\frac{\partial F}{\partial z}]^{-1}k(I-p)\frac{\partial F}{\partial z}\]
which implies that
\begin{equation}\label{eqetan}
D_\xi\eta(0,1,0,0)=0.
\end{equation}
\begin{proof}[\textbf{Proof of theorem \ref{th2}}]
The proof is under construction and will be appear after publishing.
\end{proof}
\begin{proof}[\textbf{Proof of theorem \ref{th3}}]
The proof is under construction and will be appear after publishing.
\end{proof}
\begin{proof}[\textbf{Proof of theorem \ref{thm4}}]
The proof is under construction and will be appear after publishing.
\end{proof}

We end this section by giving an application of the theorem \ref{th2} to the power-law nonlinear oscillatory system (\ref{eq19a}). Bifurcations of such  systems have been widely studied in engineering and sciences (see, for example, \cite{Chen,Siewe,Wang,Li,Kuznetsov,Zhou}). They mostly concerned on investigating of chaotic motions by using the Melnikov method, however, our purpose here is finding a bounded solution near the perturbed homoclinic. Consider the system
\begin{equation}\label{eq19a}
\dot{x}=y,\hspace{1cm}
\dot{y}=\nu x -\mu x^{p+1}+\epsilon g(x,y,t) 
\end{equation}
where $\nu, \ \mu$ are positive parameters, $p>1$ is the integral power of the strongly nonlinear term and $ g(x,y,t)$ is a self-excited force and damping. Let $\gamma(t)=\big(x(t),y(t)\big)$ be the homoclinic of the unperturbed system, ($\epsilon=0$), with $\gamma(0)=(x_{\max},0)$, $x_{\max}=\sqrt[p]{(p+2)\nu/2\mu}$. The orbit of $\gamma(t)$ is the graph of the functions
\[y_\pm(x)=\pm x\sqrt{\nu-\frac{2\mu}{p+2}x^p},\hspace{1cm} 0\leq x\leq x_{\max}.\]
It must be noted that, since (\ref{eq19a}) is Hamiltonian so the determinant $\Delta(t)$ is constant; thus it can be omitted  from the computations through the conditions.
It is easy to see that \textbf{C2} holds, thus we calculate $\mathcal{F}^{'}_1$ in \textbf{C1$^{'}$} for (\ref{eq19a}) and obtain:
\begin{eqnarray*}
\mathcal{F}_1^{'}&=&\int_{\gamma(t)}g(x,y,t)dx\\
&=&\int_0^{x_{\max}}g_2\big(x,y_+(x),t(x)\big)-g_2\big(x,y_-(x),-t(x)\big)dx
\end{eqnarray*}
where $t(x)\leq 0$ shows the time  $x(t)=x$. Thus,  if one of the following assumptions holds then $\mathcal{F}_1^{'}\neq0$ and theorem \ref{th2} implies that (\ref{eq19a}) has a bounded solution near  $\gamma$, for $|\epsilon|<\!\!<1$.
\begin{quote}
\textbf{A1:} The function $g(x,y,t)$ is increasing w.r.t. $y$ and it is even and bounded w.r.t. $t$.\\
\textbf{A2:} The function $g(x,y,t)\geq0$ is even w.r.t. $y$ and it is odd and bounded w.r.t. $t$.\\
\textbf{A3:} The function $g(x,y,t)\geq0$ is odd w.r.t. $y$ and it is even and bounded w.r.t. $t$.
\end{quote}

\section{Conclusion}
Although homoclinic bifurcations are mostly known because of the chaotic behavior they might impose on a system, here we studied them from the bifurcation theory point of view. The chaotic behavior, of course, exists if the phase space can be decomposed to the direct sum of the stable and unstable subspaces of the corresponding variational equation. We left this for further studies in another paper.   At this point, our results guarantee the existence and bifurcations of bounded solutions near an unperturbed homoclinic.

In a special case, when the function $g(x,t)$ in (\ref{eq2}) is $T$-periodic w.r.t. $t$, the bounded solution implied by theorem \ref{th2} is a homoclinic orbit based on the unique hyperbolic $T$-periodic solution near the origin. The same statement is valid for bounded solutions in theorems \ref{th3} and \ref{thm4}. In a more general case, If (\ref{eq2}) is time-dependent then from (\ref{eq18b}) and the structure of the proof of theorems \ref{th3} and \ref{thm4}, the bounded solutions $x_i(t)$, $i=1,2$, are not a time rescale of each other. Thus these solutions are distinct.

If (\ref{eq2}) is autonomous (i.e. $g(x,t)=\bar{g}(x)$ is independent of $t$) then  the bounded solution implied by the theorem \ref{th2} is a unique homoclinic orbit based on a unique hyperbolic fixed point near the origin. In this case the two distinct solutions of  theorem \ref{th3} and \ref{thm4} are time rescales of each other.

Among the conditions of this paper,  $\textbf{C5}$ is probably the hardest to verify. It needs usually to be verified numerically due to the unbounded solution $\zeta(t)$ which appears in the formula. What is interesting is that, although the conditions \textbf{C1, C4} and \textbf{C4$^{'}$} are generic, they always fail for Hamiltonian systems because of  $\mathcal{F}_{4,2}=\mathcal{F}_{4,2}^{'}=0$.
 Thus, for applying the theorems  \ref{th3} and \ref{thm4} for a Hamiltonian system, we have to  make a change of variables and bring the system into a non-Hamiltonian system. Indeed the theorems of this paper are easier to apply for non-Hamiltonian systems.

Finally, It is useful to note that, although the conditions of this paper are formulated by terms of usual integrals on $\mathbb{R}$ or $\mathbb{R}^2$, for a real application, it is more appropriate to consider them as  integrals on the curve $\gamma(t)$ (see the example of section 4). The later mentioned forms are considered because they are more suitable for the proofs of  theorems.

\section{Conflict of interest}
The authors declare that they have no conflict of interest.

\end{document}